\newtheoremstyle{mydefstyle} 
{\topsep}                    
{\topsep}                    
{\upshape}                   
{}                           
{\bfseries}                   
{.}                          
{.5em}                       
{}
\newtheorem{theorem}{Theorem}
\newtheorem{lemma}{Lemma}
\theoremstyle{mydefstyle}
\newcommand{\R}{\ensuremath{\mathbb{R}}}
\begin{document}

\title{The Unified Approach for Best Choice Modeling Applied to
  Alternative-Choice Selection Problems}

\author{Rémi
  Dendievel\footnote{Postal address: Département de Mathématique,
    Faculté des Sciences, Université Libre de Bruxelles, Campus de la
    Plaine - CP 210, Boulevard du Triomphe, 1050 Bruxelles,
    Belgium. Email address: redendie@ulb.ac.be}} \date{} 

\maketitle
\begin{abstract}
  \noindent
  The objective of this paper is to show that the so-called
  \emph{unified approach} to stopping problems with unknown cardinality
  introduced in Bruss (1984) proves to be efficient for solving other
  types of best-choice problems. We show that what we will call the
  \emph{alternative-choice} stopping problem, which will be exemplified right away in
  Section~\ref{sec:theor-backgr}, can be seen as a ``two-sided''
  Secretary problem. This problem is instigated by a former problem of
  R.~R.~Weber (Cambridge University). Our approach yields for unknown cardinality
  the sharp lower bound $1/2$ for the probability of success. This problem is, at
  the same time, a special case of a model more generally based on $k\ge 2$
  linearly ordered subsets. We shall also give the solution for such problems
  for $k$ independent streams of arrivals.   Our approach is elementary and
  self-contained.

  \smallskip
  \noindent\emph{Keywords.}\quad $1/e$-law of best choice, optimal
  prediction, Weber stopping problem, secretary problem, selection of
  maximum elements, partially odered set.
  
  \noindent 2010 Mathematics Subject Classification:  60G40

\end{abstract}

\section{Motivating Example and Theoretical Background}
\label{sec:theor-backgr}

Consider an interviewer who interviews sequentially streams of 
candidates of men and women both arriving in $[0,1]$. The total numbers
of men and women are random variables and their arrival times are also
i.i.d. The interviewer can rank in a unique way those of the same gender.
At any time that a candidate presents itself the interviewer may stop and
choose that candidate. Similarly to the well-known Secretary Problem, he
is successful if he selects either the best of the men, or the best of
the women. This is an example, with $k=2$, of what we call an \emph{alternative
best-choice stopping problem} (which was motivated by Weber, 2012).
We show that the unified approach to stopping problems~(Bruss, 1984)
can be applied to this problem. In the case of the men and women, and
numbers of men and women are a priori unknown, we find that by choosing
the first candidate to arrive after time $1/2$ who is best of its gender
so far, the probability of success has the sharp lower bound of $1/2$.

\bigskip
\noindent\textbf{Unified approach.} An unknown number $N>0$ of
uniquely rankable options arrive on $[0,1]$  with independent and identically
distributed arrival times according to a continuous distribution function $F$
defined on $[0,T]$.
One speaks of a \emph{success} if the decision maker accepts the absolute rank~1,
or in other words, the very last relatively best (record).
Let $\sigma_t$ be the strategy to wait until time $t$ and then to select the first
record (relative best) option thereafter, if any (the stopping time induced by $\sigma_t$ is set
equal to $T,$ otherwise). We first recall the $1/e$-law of Bruss~(1984).
Let 
\begin{equation}
  \label{1overElaw}
  t^\star = \inf\{ t:  F(t) = 1/e \}=:F^{-1}(1/e).
\end{equation}
Then, the $1/e$-law says: First, the strategy to stop on the first record appearing
at time $t^\star$ or later, $\sigma_{t^\star}$, say, succeeds with probability at least
$1/e$. Second, conditioned on $\{N=n\}$ the bound $1/e$ is the limit
of decreasing success probabilities as $n \to \infty.$ Third, the
probability of $\sigma_{t^\star}$ (called $1/e$-strategy) selecting no
candidate at all is exactly $1/e$. It is for this {\em triple} coincidence of
the number $1/e$ that the author called the result ``$1/e$-law''. This result
came as a surprise since, in general, a success probability near $1/e$ seemed at
that time out of reach. See the comments of Samuels (1985) in \emph{Mathematical Reviews}.

\subsubsection*{Alternative choice and related work}
\label{sec:related-work}
What we will call the \emph{problem of alternative choice} is
the  continuous-time model of Bruss (1984) but now with $k\ge 1$ classes of
options.
For example the problem of choosing with one choice either the best male, or the
best female candidate. Observed options are supposed to be uniquely rankable among
all options in the same class but typically not comparable with options from
another class.

This version of Weber's problem as well as the underlying work may be seen as a
selection model based on partially ordered sets (posets), or, as we will argue,
also as a multicriteria selection model. There are several interesting
articles which are related with our work. Selection problems involving
posets seem to have been first studied by Stadje~(1980). Morayne et
al.~(2008) propose a novel interesting universal algorithm for
posets where the cardinality of the set is 
known. The ordering structure of the poset need not be known; in
particular, the number of maximal elements in the set need not be known.

Freij and Wästlund~(2010) present a generalized strategy in
continuous time for a similar model to theirs, where the size of the
poset is not fixed. This is partially in the spirit of the underlying
paper following the unified approach. Their work can be seen as a
generalization of the previous work.  Interestingly, they also obtain the
ubiquitous $1/e$ lower bound for the success probability in quite a
general setting.

Kumar et al.~(2011) and Garrod and Morris~(2013) studied independently
a similar stopping problem on a poset of $n$ elements where both $n$
and the number $k$ of maximal elements are known. In that case, the
probability of success of the strategy they propose can be improved to
$k^{-1/(k-1)}$, a value which is bound to turn up in our generalization by
the unified approach.

Gnedin's multicriteria models~(Gnedin, 1981) share some similarity with our
model. As he explained: it is unlikely that a secretary is best in
several criteria at the same time; the probability that this happens
tends to $0$ as the number $n$ of applicants tends to infinity. That
is why the model is similar to one where each secretary belongs to
only one category.

In selection problems with more information on the number of arrivals
of options, so-called quasi-stationarity is a very desirable property
for obtaining closed-form solutions. The approach of Bruss and
Samuels~(1990) can be adapted for options stemming from different
classes, but here again the unified approach without any knowledge at all is
not that much weaker in the result and therefore, taking things
together, clearly preferable for applications.

Parts of the precedingly cited articles go deeper into certain
directions not mentioned here. In particular our result is partially
intrinsic in the interesting paper of Garrod and Morris (2013). However,
first, the present article shows that the unified approach on which we
focus yields the results in an elementary and self-contained way, and
has, in particular for the case $k= 2$ much appeal for applications in
real life. This is seen in several quite different examples in Bruss and
Ferguson~(2002) for the case $k=1$. Second, according to their list of
references, none of the authors on poset selection problems cited above
has referenced the unified approach of 1984 which adds to our motivation
for this article.

\section{Main results}
\label{sec:main-result}

{\em Threshold-time strategy.} Similarly to what has been done in the
$1/e$-law, we aim to find a time threshold $t^\star$ after which we
decide to stop on the first encountered record, regardless of the
class to which this record belongs. Let $\sigma_t$ be the strategy to
wait until time $t$ and then to select the first record option
thereafter, if any; the stopping time induced by $\sigma_t$ is set
equal to $T$ otherwise, meaning a complete failure (no choice at all).

\medskip
\noindent\emph{Model.\ }Let $N_1,N_2,\dotsc,N_k\in \{1, 2, \dots \}$
with no other information available on these numbers than being positive integers.
They are seen as the numbers of options of class~$1$, respectively class~$2$,\dots,
and so on. We consider iid arrival times $X_{l,i}$ ($1\le i\le N_k$, $1\le
l\le k$) of the options on $[0,1]$ so that the $r$th option of type
$j$ arrives at time $X_{j,r}$. Since there are $k$ non-empty classes, there are
also $k$ options which are best in their class by definition, because we supposed
that they can be ranked in a unique way. These will be called ``maxima''.
We would like to stop online on any of the maxima.

\subsection*{The $1/2$-rule}
\label{sec:particular-case-}

In the view of real life applications, the case $k=2$ is particularly
appealing. First, the decision rule will be easy to
remember. Second, it also covers the problem predicting the best or alternatively
the worst outcome from  a stream of events. Indeed, we can construct the two
streams artificially as follows. Consider one stream of rankable secretaries and
remember the very first secretary. Then, all the subsequent secretaries with a better
rank than the remembered secretary belong to ``class~1'' and the secretaries with
a worse rank will belong to class~2. Hence this serves the problem of finding
either the best or the worst of all. We also mention here that Bayon et al.~(2016)
have studied different cases of selecting the best or worst secretary from a
sequence of secretaries with random length.

If one single stream of options (not necessarily secretaries) can be divided, on
some rational, in two complementary classes (``positive'' and ``negative''), then
this construction applies of course as well. In the case of a financial index,
positive increments belong in one class, negative increments belong to the second
class.

Theorem~\ref{thm:1-over-2-law} is Theorem~\ref{thm:k-class} with $k=2$, but we state
Theorem~\ref{thm:1-over-2-law} first because of its appealing simplicity.

\begin{theorem}
  \label{thm:1-over-2-law}
  In the model described above in Section~\ref{sec:main-result}, if
  there are two non-empty classes of options, then stopping time
  $\sigma_{1/2}$ which stops on the first record belonging to either of the
  two classes after time $1/2$ has success probability of at
  least $1/2$. (This holds thus for any number of options as long as both
  classes are non-empty.)
\end{theorem}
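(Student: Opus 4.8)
The plan is to compute the success probability of $\sigma_{1/2}$ by conditioning on the arrival time of the record that we eventually stop on, exactly in the spirit of Bruss (1984). First I would observe that because arrival times are i.i.d. continuous, conditioning on the unordered set of arrival times and on which class each arrival belongs to, the relative ranks within each class are uniformly random and independent of the times; hence I may work with the arrival times as a Poisson-type bookkeeping device. Fix a small parameter and consider the event $A_s\,ds$ that the option we stop on arrives in $[s,s+ds]$ with $s>1/2$: this requires (i) that this option is a record \emph{in its own class} among all same-class options arriving in $[0,s]$, and (ii) that no record (of either class) arrived in $(1/2,s)$. I would then show that for $\sigma_{1/2}$ to be successful it suffices that the option stopped on is in fact a \emph{maximum} of its class, i.e. no later same-class option beats it.

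The key computation is to integrate, over $s\in(1/2,1)$, the probability of the conjunction of these three events. Condition on $N_1=m$, $N_2=n$. The probability that a tagged class-1 option arriving at $s$ is a class-1 record is $1/(\text{number of class-1 arrivals in }[0,s])$; averaging the relevant indicator over the Binomial-distributed counts produces the familiar $\tfrac1s\log\tfrac1{1/2}$-type factors. More precisely, arguing as in the $1/e$-law but with the two classes superimposed, the probability that $\sigma_{1/2}$ selects a class-$j$ maximum works out to an expression of the form $\int_{1/2}^1 (\text{record-at-}s)\cdot(\text{no earlier record in }(1/2,s))\,ds$, and summing the two classes and simplifying gives a lower bound that is monotone in the counts and bottoms out at $1/2$. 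The cleanest route is: (a) show the class-$j$ success probability is at least $\tfrac12\,\mathbb{E}[\text{something}\ge 1]$ uniformly in $N_j$, using that $x\log(1/x)$ and related quantities are bounded below appropriately on the relevant range; (b) add over $j=1,2$; (c) verify the bound $1/2$ is attained in the limit of large $N_1,N_2$, which also shows sharpness (the third part of the theorem's flavor, matching the $1/e$-law's sharpness claim).

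An alternative and perhaps slicker route, which I would actually prefer to write up, exploits the ``two-sided secretary'' picture directly: merge the two streams into one stream of $N_1+N_2$ arrivals, but now an arrival is a \emph{relevant record} if it is best-so-far \emph{within its class}. The point is that the sequence of relevant records, read in time order, is richer than in the one-class problem — in fact the last class-1 record and the last class-2 record are \emph{both} maxima, so there are (generically) two ``winning'' arrivals rather than one. Stopping on the first relevant record after $1/2$ succeeds iff that first post-$1/2$ relevant record happens to be one of these two terminal ones. Because the two terminal records are the last records of their respective (independent-ranking) subsequences, a symmetry/exchangeability argument shows the conditional probability of hitting a terminal record, given that we stop at all, is at least that of the one-class problem with the threshold doubled in ``effective time'', i.e. at least $2\cdot(\text{the } 1/e\text{-law integrand with }t^\star\to 1/2)$; carrying the constants through yields $\ge 1/2$.

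The main obstacle will be part (b)'s uniformity: one must show the lower bound $1/2$ holds for \emph{every} pair $(N_1,N_2)$ of positive integers, not just asymptotically, and the per-class success probabilities are \emph{not} individually monotone in the two counts — increasing $N_1$ helps class-1 detection but hurts class-2 by crowding the post-$1/2$ window with class-1 records. The resolution is that the two effects exactly trade off at the level of the sum, which is where the choice of threshold $1/2$ (rather than $1/e$) is forced: the factor-of-2 from having two winners must be paid for by raising the threshold so that $t^\star\log(1/t^\star)$-type terms rebalance, and $1/2$ is precisely the value making the worst-case sum equal $1/2$. I would isolate this as a short lemma bounding the relevant double sum/integral from below by $1/2$ with equality only in the limit, and everything else is routine conditioning.
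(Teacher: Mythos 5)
Your plan correctly sets up the event to be bounded (stop at some $s>1/2$ on a within-class record, with no record of either class in $(1/2,s)$, the stopped-on option being the maximum of its class), but it halts exactly where the content of the theorem lies. The whole difficulty is to show that the resulting expression is at least $1/2$ \emph{uniformly} over all pairs $(N_1,N_2)$ of positive integers, and you defer this to ``a short lemma bounding the relevant double sum/integral from below by $1/2$'' without supplying the argument. The heuristic you offer for why it should work --- an ``exact trade-off'' between the two classes --- is not in fact what happens: increasing $N_1$ does not help class-$1$ detection; it lowers the overall success probability monotonically in each count (this is part (b) of the paper's general theorem), so the worst case is the joint limit $N_1,N_2\to\infty$ and what is needed is a count-free lower bound, not a cancellation identity. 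The second, ``slicker'' route is weaker still: the factor of $2$ from having two terminal records is not free, because merging the streams also doubles the supply of non-terminal records that can trigger a premature stop, and the asserted exchangeability step (``at least the one-class integrand with the threshold doubled in effective time'') is not justified and is not obviously true.

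The ingredient that closes the gap, and which is absent from your proposal, is the elementary uniform bound: for any number $n\ge 0$ of class-$j$ arrivals, the probability that no class-$j$ record occurs in $[t,s)$ is at least $t/s$ --- either there is no class-$j$ arrival in $[0,s]$ at all, or there is a best one among those, and it lands in $[0,t]$ with conditional probability exactly $t/s$. By independence across classes this gives $P\bigl(\text{no record of any of }i\text{ classes in }[t,s)\bigr)\ge (t/s)^i$, with no dependence on the counts. The paper combines this with a decomposition over which of the class maxima arrive after the threshold $t$ (rather than your per-class decomposition of the successful stop), integrates against the density of the first post-$t$ maximum, and obtains $p(t)\ge 2t(1-t)$, which equals $1/2$ at $t=1/2$. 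Without this uniform lemma, or an equivalent count-free estimate, your outline does not yet constitute a proof.
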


 \medskip \noindent The proof of Theorem~\ref{thm:1-over-2-law} is immediate from the proof
 following Theorem~\ref{thm:k-class} 
and can therefore be postponed. 
 
 \medskip
 \noindent{\bf Remark 1.\ } Here again, this is the best fixed-threshold
 	strategy yielding the success probability $1/2$ for such strategies. It
	cannot be improved because the probability of success of $\sigma_{1/2}$ converges
  to $1/2$ if the numbers of options in both classes tend to infinity.
  This confirms the result in the discrete setting from Garrod and Morris (2013).

\subsection*{The general case}
\label{sec:general-case}

\begin{theorem}\label{thm:k-class}
	Let $N_1,N_2,\dots,N_k\ge 1$ denote the unknown number of options
	of class~$1$, respectively class~$2$, \dots, respectively class~$k$.
	We assume that the arrival times of all options are independent and that
	their distribution is uniform on $[0,1]$. We suppose that inside each class,
	the options are uniquely rankable against one another.
	
  	Then, the strategy $\sigma_{t_k}$ which stops on the first record
  	(=relative maximum in its class) appearing after time $t_k
  	:= k^{-1/(k-1)}$ satisfies
  \begin{enumerate}[(a)]
  \item $\sigma_{t_k}$ succeeds with probability at least $t_k.$
  \item Seen as a function of the unknowns $N_1=n_1$, $N_2=n_2$,
    \dots, $N_k=n_k$, the success probability decreases monotonically
    in all $n_j$ with limit $t_k$ as $\min_{1\le j\le k} n_j \to
    \infty$.
  \end{enumerate}
\end{theorem}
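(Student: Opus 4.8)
The plan is to condition on $\{N_1=n_1,\dots,N_k=n_k\}$, write the success probability of $\sigma_t$ as an explicit function of $\mathbf n=(n_1,\dots,n_k)$ and the threshold $t$, and then optimise in $t$ and let $\mathbf n\to\infty$. Two elementary facts do the work. First, because all $n_1+\dots+n_k$ arrival times are i.i.d.\ uniform and the within-class ranking is independent of the times, the arrival time $Y_j$ of the class-$j$ maximum $M_j$ is uniform on $[0,1]$, and conditionally on $\{Y_j=y\}$ the other $n_j-1$ options of class $j$, and the $n_l$ options of each class $l\neq j$, are i.i.d.\ uniform on $[0,1]$ with independent uniform rankings. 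Second, a success means $\sigma_t$ stops on one of $M_1,\dots,M_k$; since these are distinct options the success event is the disjoint union over $j$ of $\{\sigma_t\text{ stops on }M_j\}$, and, $M_j$ being a record the instant it arrives, this last event equals $\{Y_j>t\}\cap\{\text{no record of any class appears in }(t,Y_j)\}$.

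Next I would evaluate, for fixed $y>t$, the conditional probability that no record occurs in $(t,y)$. The key reduction is that ``no record of class $l$ in $(t,y)$'' is the same as ``the best class-$l$ option among those arriving in $[0,y)$ actually arrives in $[0,t]$'': a class-$l$ record after $t$ would be a new class-$l$ best after $t$, whereas once the class-$l$ best on $[0,y)$ has shown up by time $t$ no later class-$l$ arrival before $y$ can beat it. Conditioning on the number of class-$l$ options that land in $[0,y)$, which is $\mathrm{Bin}(n_l,y)$ for $l\neq j$ and $\mathrm{Bin}(n_j-1,y)$ for $l=j$, and using that the best of them is uniformly placed in $[0,y)$, this probability is
\[
  h_m(y)\;:=\;1-\bigl(1-(1-y)^{m}\bigr)\tfrac{y-t}{y},\qquad m=n_l\ \text{(resp. }m=n_j-1\text{)}.
\]
As the classes use disjoint option sets, these events are independent given $\{Y_j=y\}$, so
\[
  P(\text{success}\mid\mathbf n)\;=\;\sum_{j=1}^{k}\int_t^1 h_{n_j-1}(y)\prod_{l\neq j}h_{n_l}(y)\,dy .
\]

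From this closed form part (b) follows almost for free: for $y\in(t,1)$ the quantity $(1-y)^m$ is decreasing in $m$, hence $h_m(y)$ is decreasing in $m$, and $0\le h_m(y)\le 1$; therefore raising any $n_j$ lowers every factor in which it appears while all factors stay nonnegative, so $P(\text{success}\mid\mathbf n)$ is coordinatewise decreasing. Since $h_m(y)\to t/y$ pointwise on $(t,1]$, bounded convergence gives
\[
  \lim_{\min_j n_j\to\infty}P(\text{success}\mid\mathbf n)\;=\;\sum_{j=1}^{k}\int_t^1\Bigl(\tfrac{t}{y}\Bigr)^{k}dy\;=\;\frac{k}{k-1}\,(t-t^{k}),
\]
and a one-variable check shows the right-hand side is maximised at $t=t_k=k^{-1/(k-1)}$ with value $\frac{k}{k-1}t_k(1-t_k^{k-1})=t_k$. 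Part (a) then comes from the monotonicity alone: comparing $\mathbf n$ with $(m,\dots,m)$ for $m\ge\max_j n_j$ and letting $m\to\infty$ gives $P(\text{success}\mid\mathbf n)\ge t_k$ for every $\mathbf n$ (hence unconditionally), with $t_k$ attained only in the limit. Taking $k=2$ (so $t_2=1/2$) yields Theorem~\ref{thm:1-over-2-law}.

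The integrals and the optimisation are routine; the step I expect to require genuine care is the second paragraph — justifying that ``no record in $(t,Y_j)$'' reduces class-by-class to ``the relevant best option already appeared by $t$,'' and that conditioning on $Y_j$ makes the class contributions independent with the stated Binomial counts. Small points to watch are the off-by-one in the exponent of the class-$j$ factor (its maximum is already pinned at $y$, so it contributes $h_{n_j-1}$ rather than $h_{n_j}$) and the degenerate cases $n_l=1$ and $y$ near the endpoints, where the formula still behaves correctly.
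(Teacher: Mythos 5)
Your proof is correct, and it takes a genuinely different route from the paper's, even though both rest on the same elementary kernel (the best of the uniform arrivals in $[0,y)$ is itself uniformly located in $[0,y)$, whence the no-record probability is at least $t/y$). The paper decomposes the success probability according to \emph{how many} of the $k$ class-maxima arrive after the threshold (events $E_i(t)$ with binomial weights), conditions on the first such maximum via the density of a conditioned minimum of uniforms, and then only ever \emph{bounds} the no-record probability below by $(t/s)^i$ through a separate lemma; its part (b) is argued qualitatively from the monotone behaviour of these conditional probabilities. You instead decompose over \emph{which} maximum $M_j$ the strategy stops on and compute the no-record probability exactly, obtaining the closed form $h_m(y)=1-\bigl(1-(1-y)^m\bigr)\frac{y-t}{y}$ and hence an exact expression $P(\text{success}\mid\mathbf n)=\sum_j\int_t^1 h_{n_j-1}(y)\prod_{l\neq j}h_{n_l}(y)\,dy$. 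This buys a cleaner and fully rigorous part (b) --- coordinatewise monotonicity and the pointwise limit $h_m(y)\to t/y$ are read off directly, with bounded convergence doing the rest --- and it delivers part (a) as a corollary of (b) by monotonicity, rather than through the paper's chain of inequalities; the price is that you must carry the $n_j$ explicitly, including the off-by-one factor $h_{n_j-1}$ for the selected class, whereas the paper's bound is uniform in the $n_j$ from the outset. Your reduction of ``no class-$l$ record in $(t,y)$'' to ``the best class-$l$ arrival in $[0,y)$, if any, lands in $[0,t]$'' is exactly right, as is the conditional independence across classes given $Y_j=y$; a quick sanity check ($k=2$, $n_1=n_2=1$ gives $1-t^2$, which is evidently the exact success probability) confirms the formula.
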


\begin{proof}
  \textbf{(a)} Define $p(t)$ as the probability that the strategy
  $\sigma_t$ will turn out (at time $1$) to be a success.
  (Remark~2 is more specific about issues related to this
  ``probability''.)
  Denote by $T_1,T_2, \dots,T_k$ the arrival
  times of the maxima of class~$1$, respectively class~$2$, \dots,
  respectively class~$k$.  Consider the events
  \begin{equation}
    \label{eq:E_it}
   E_i(t):= \{T_1,\dots,T_{k-i}<t\}
   		\cap \{T_{k-i+1},\dots,T_{k-1},T_k\ge t\},
	\: \text{for $i=1,2,\dots,k$},
  \end{equation}
  and let
  \begin{equation}\label{eq:S_it}
  S_i(t) = E_i(t) \cap \{\sigma_t \text{ turns out to be a success}\}.
  \end{equation}
  We have $p(t) = \sum_{i=1}^k \binom{k}{i} P(S_i(t))$,
  because there are $\binom{k}{k-i}$ ways to choose $k-i$
  i.i.d.\ random arrival times out of a set of $k$ arrival times.
  Since the classes are non-empty, this is independent of their size since
  a maximum in each class must exist.
 
  Let $T_{(i)} := \min(T_{k-i+1}, \dots, T_k)$ be the arrival time
  of the first maxima, restricted to the last $i$ classes, arriving in $[t,1]$.
  We notice that when $E_i(t)$ happens, the strategy leads to a success if
  stopping occurs at time $T_{(i)}$, by definition of $\sigma_t$.
  Because the $T_j$'s are uniformly distributed, the conditional 
  density of $T_{(i)}$ given that $E_i(t)$ happens, denoted by $f^{(i)}$, is
  \begin{equation}
    \label{eq:fi-density}
    f^{(i)}(s) = \chi_{[0,1]}(s) i(1-s)^{(i-1)} (1-t)^{-i},\quad s\in\R,
  \end{equation}
  where $\chi_A$ denotes the indicator function of a set $A$. We may
  now write
  \begin{align}
    p(t) &= \sum_{i=1}^{k} \binom{k}{i} P(\sigma_t\text{
      succeeds}|E_i(t)) P(E_i(t))
    \nonumber\\
    &= \sum_{i=1}^{k} \binom{k}{i} \biggl(\int_{t}^{1} f^{(i)}(s) P(\sigma_t\text{
      succeeds}|T_{(i)}=s,E_i(t)) \, ds \biggr)  t^{k-i}(1-t)^i \nonumber\\
    &= \sum_{i=1}^{k} \binom{k}{i} \biggl(\int_{t}^{1} f^{(i)}(s)
    P(A(t,s)|T_{(i)}=s,E_i(t)) \, ds \biggr) 
    t^{k-i}(1-t)^i, \label{eq:pt-decomp}
  \end{align}
  where $A(t,s)$ is defined as the event that no record of any class
  is seen on the interval $[t,s)$, for $0 \le t<s<1$. We now use
  \begin{equation}\label{eq:PAts-gen}
  P(A(t,s)|T_{(i)}=s,E_i(t))\ge (t/s)^i.
  \end{equation}
  To keep the current proof streamlined, we will prove~\eqref{eq:PAts-gen}
  in a technical lemma after the current proof.
	Using inequality~\eqref{eq:PAts-gen} in \eqref{eq:pt-decomp}, we obtain
  \begin{equation}\label{eq:pt-coeff-not-id}
    p(t) \ge \int_{t}^{1} \sum_{i=1}^{k} \binom{k}{i} t^{k-i}(1-t)^i
    i(1-s)^{(i-1)} (1-t)^{-i} (t/s)^i \, ds.
  \end{equation}
  Cancelling out a few terms, Equation~\eqref{eq:pt-coeff-not-id} simplifies
  to
  \begin{equation}\label{eq:pt-one-step-simplif}
  p(t) \ge \int_{t}^{1} t^{k}\sum_{i=1}^{k} \binom{k}{i} 
    i(1-s)^{(i-1)} s^{-i} \, ds.
  \end{equation}
  We can multiply and divide each term of the sum in~\eqref{eq:pt-one-step-simplif}
  by $(1-s)s^k$, so as to obtain
  \begin{equation}
	p(t) \ge \int_{t}^{1} \frac{t^{k}}{(1-s)s^{k}}\sum_{i=1}^{k}\binom{k}{i} 
    i(1-s)^{i} s^{k-i} \, ds,
  \end{equation}
  and the sum can be interpreted as the expectation of a random variable
  with distribution $\mathrm{Binomial}(k,1-s)$. Therefore the value of the
  sum $k(1-s)$.  Finally,
  \begin{equation}
    \label{eq:pt-bound-coeff-cki}
    p(t) \ge \int_{t}^{1} t^k k s^{-k} \, ds =
    \frac{k}{k-1}(t-t^k)=: h_k(t),
  \end{equation}
  The function $h_k(t)$ is maximized in $t=t_k$, hence $p(t_k)\ge
  h_k(t_k) = t_k$.

  \medskip
  \noindent\textbf{(b)} The proof of (a) is organized in such a way
  that we can argue here without further calculations.  Recall now
  \eqref{eq:pt-decomp} and . Note that conditioned
  on $N_j=n_j\ge n$, the event that of at least one arrival in class
  $j$ has an increasing probability as $n_j$ increases, for all
  $j$. But then the conditional probabilities of $A(t,s)$ in the
  integrands must be non-increasing and hence converge to the shown
  lower bounds $(t/s)^j$. Since this holds for all $0<t\le s<1$
  pointwise in $s,$ it must hold for the corresponding integrals. As
  they all have non-negative integrands, it must hold for their sum.
\end{proof}

\begin{lemma}
	Suppose that $n\ge 0$ random variables are independent and uniformly
	distributed on $[0,1]$. These random variables represent the arrival
	times of $n$ uniquely rankable options. The probability that no relative
	maximum is seen in the interval $[t,s)$ is greater or equal to $t/s$.
\end{lemma}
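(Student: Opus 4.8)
The plan is to reduce the statement to one elementary symmetry fact: among finitely many i.i.d.\ uniform points, the point of highest rank sits at a uniformly random location. Call an option a \emph{record} (relative maximum) if its rank beats that of every option arriving strictly before it; we must bound below the probability that no record falls in $[t,s)$, where $0\le t<s<1$, so in particular $t/s<1$. Two degenerate situations are immediate: if $n=0$, or more generally if no option arrives before time $s$, then $[t,s)$ contains no option at all and the probability equals $1\ge t/s$. For the remaining case, let $M$ be the number of options arriving in $[0,s)$ and condition on $M=m$ with $m\ge1$. By the usual conditioning property of i.i.d.\ samples, given $M=m$ those $m$ arrival times are i.i.d.\ uniform on $[0,s)$, and --- the options being uniquely rankable --- their relative ranking is an independent uniformly random permutation of $m$ elements.

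The crux is the following identity of events, valid on $\{M=m\}$ with $m\ge1$:
\[
\{\text{no record lies in }[t,s)\}\;=\;\{\text{the highest-ranked of the }m\text{ options arrives in }[0,t)\}.
\]
The inclusion ``$\supseteq$'' is clear: the top option beats every option arriving after it, so if it arrives before $t$, nothing in $[t,s)$ can be a record. For ``$\subseteq$'' I argue by contraposition: if the top option arrives at $x^\star\in[t,s)$, then, beating everything arriving before $x^\star$, it is itself a record lying in $[t,s)$. The only thing to watch is that one must \emph{not} claim the (false) statement that every record in $[t,s)$ is the overall best on $[0,s)$; the contrapositive uses only the overall best, so no difficulty arises. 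The edge cases ``$M=0$'' and ``options present but none before $t$'' are covered respectively by $t/s\ge0$ and by the right-hand event being empty.

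Now evaluate the right-hand event. Since the ranking is independent of the arrival times, the highest-ranked of the $m$ options is equally likely to be any one of them, so its arrival time is distributed as a single uniform on $[0,s)$; hence the conditional probability of ``no record in $[t,s)$'' equals $t/s$, independently of $m\ge1$. Averaging over $M$, and using that this conditional probability is $1$ on $\{M=0\}$,
\[
P(\text{no record in }[t,s))=P(M=0)+P(M\ge1)\tfrac{t}{s}=\tfrac{t}{s}+P(M=0)\bigl(1-\tfrac{t}{s}\bigr)\ \ge\ \tfrac{t}{s},
\]
since $t\le s$. This is the assertion of the lemma.

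I do not anticipate a genuine obstacle: the only delicate point is pinning down the event identity of the second paragraph together with its edge cases, and recalling that the bound is needed for all $n\ge0$. For completeness, this is precisely the form in which the lemma feeds~\eqref{eq:PAts-gen}: conditionally on $E_i(t)\cap\{T_{(i)}=s\}$, a class whose maximum has already arrived before $t$ yields no record in $[t,s)$ (probability $1$), while each of the $i$ classes whose maximum arrives at time $\ge s$ has its non-maximal options forming an independent family to which the lemma applies with $n=n_j-1\ge0$; independence across classes multiplies the resulting factors into $(t/s)^i$.
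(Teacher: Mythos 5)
Your proof is correct and follows essentially the same route as the paper's: condition on whether any option arrives before $s$, identify ``no record in $[t,s)$'' with ``the best of the options arriving in $[0,s)$ arrives before $t$'', and use that this best option's arrival time is uniform on $[0,s)$ to get the conditional probability $t/s$. You merely spell out the event identity (and the multi-class application yielding $(t/s)^i$) more explicitly than the paper does, which is a welcome but not substantively different elaboration.
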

\begin{proof}
	We first prove the result for one class of options. 
	The following argument can already be found in Bruss and Yor~(2012)
	and in a slightly different form in Freij and Wästlund~(2010).
	Look at the following constellation of points on $[0,1]$.
	\begin{center}
	\bigskip
	\begin{tikzpicture}
		\draw[|-|] (0,0) node[below]{\scriptsize$0$}
				-- (5,0) node[below]{\scriptsize$1$}; 
		\fill (1,0) circle (1pt);
		\fill (1.3,0) circle (1pt);
		\fill (2.4,0) circle (1pt);
		\draw (2.2,0.05) -- ++(0,-0.1); 
		\draw (2.8,0.05) -- ++(0,-0.1); 
		\node at (2.2,-.2) {$t$}; 
		\node at (2.8,-.2) {$s$}; 
	\end{tikzpicture}
	\end{center}
	There are two possibilities. Either there is no arrival at all in $[0,s]$,
	in which case $A(t,s)$ happens automatically, or there is at least one. But
	if there is at least one, then there is a best among them. Since a conditional
	uniform random variable stays uniform on the given interval, any point has
	probability $t/s$ of arriving in $[0,t]$, given it arrives in $[0,s]$.
	Hence, since $t,s\le 1$, we have that the probability of no record (in 
	class 1) in $[t,s]$ is either $t/s$ or one, that is at least $t/s$.
	
	Now, more generally, in the context of Theorem~\ref{thm:k-class},
	this argument can be applied to any class of options.
	If we consider $i$ classes, by independence of the arrival 
	times, the probability of the event $A(t,s)$ that no relative maximum (record)
	from these $i$ classes is seen in $[t,s)$ is then simply at least $(t/s)^i$.
\end{proof}

\noindent\textbf{Remark 2.\ } Remember the definition of $p(t)$ at the beginning of
the proof of Theorem~\ref{thm:k-class}. Note that the event of the last record at
time $t$ in its class is not measurable in the filtration generated by the observations
up to time $t$. This probability would only be defined conditionally. The problem of
giving a lower bound is however not affected by this.
\medskip




\medskip
\noindent\textbf{Remark 3.\ }In the context of
Theorem~\ref{thm:k-class}, the probability of not stopping at all is
$k^{-1}t_k$. Hence for $k>1$ we cannot have another ``triple''
coincidence of the three crucial values as in the $1/e$-law for $k=1.$

\medskip

\noindent\emph{\bf Open problem.} Notice that
Theorem~\ref{thm:k-class} does not mention the word
optimality. Optimality is an open problem, as it also still is for the
$1/e$-law, as reconfirmed in Bruss and Yor (2012, Section~6.4). The
latter have shown optimality of their solution for the so-called
last-arrival problem which looks very similar, so that the conjecture
of optimality is now almost compelling. However, the complete lack of
information about the number of options is in record-based problems
even more delicate.

\section*{Acknowledgement}

The author is thankful to the referee for his very helpful comments which
improved the paper in many ways.




\end{document}